\newtheorem{theorem}{Theorem}[section]
\newtheorem*{Acknowledgement}{\textnormal{\textbf{Acknowledgement}}}
\newtheorem{corollary}[theorem]{Corollary}
\theoremstyle{definition}
\newtheorem{definition}[theorem]{Definition}
\newtheorem{example}[theorem]{Example}
\newtheorem{Open Prob}[theorem]{Open Problem}
\theoremstyle{remark}
\numberwithin{equation}{section}
\def\DJ{\leavevmode\setbox0=\hbox{D}\kern0pt\rlap{\kern.04em\raise.188\ht0\hbox{-}}D}
\begin{document}

\title[Some remarks on the Metrizability of $\mathcal{F}$-metric spaces]{Some remarks on the metrizability of $\mathcal{F}$-metric spaces}

\author[S.\ Som, A.\ Bera, L.K.\ Dey,]
{Sumit Som$^{1}$, Ashis Bera$^{2}$,  Lakshmi Kanta Dey$^{3}$}

\address{{$^{1}$} Sumit Som,
                    Department of Mathematics,
                    National Institute of Technology
                    Durgapur, India.}
                    \email{somkakdwip@gmail.com}
\address{{$^{2}$} Ashis Bera
                    Department of Mathematics,
                    National Institute of Technology
                    Durgapur, India.}
                    \email{beraashis.math@gmail.com}
\address{{$^{3}$} Lakshmi Kanta Dey,
                    Department of Mathematics,
                    National Institute of Technology
                    Durgapur, India.}
                    \email{lakshmikdey@yahoo.co.in}

\keywords{ $\mathcal{F}$-metric space, metrizability, Banach contraction principle. \\
\indent 2010 {\it Mathematics Subject Classification}.  $47$H$10$, $54$A$20$, $54$E$50$.}

\begin{abstract}
In this manuscript, we claim that the newly introduced $\mathcal{F}$-metric space \cite[\, M.~Jleli and B.~Samet, On a new generalization of metric spaces, J. Fixed Point Theory Appl, 20(3) 2018]{JS1} is metrizable. Also, we deduce that the notions of convergence, Cauchy sequence, completeness due to Jleli and Samet  for $\mathcal{F}$-metric spaces are equivalent with that of usual metric spaces. Moreover, we assert that the Banach contraction principle in the context of $\mathcal{F}$-metric spaces is a direct consequence of its standard metric counterpart.
\end{abstract}

\maketitle

\setcounter{page}{1}

\centerline{}

\centerline{}

\section{\bf Introduction}
\baselineskip .55 cm
Many mathematicians are attracted to work on a topic which is more fundamental and has a lot of applications in many diversified fields. One of the major motivations is to generalize or weaken a certain structure and develop new results compatible to the weaker one. Indeed, there are generalizations which  genuinely develop the subject as a whole and also, there are some which contribute nothing new to the literature.
Likewise, to generalize the notion of distance functions, in 1906, Fr$\acute{e}$chet first introduced the concept of metric spaces and a century later, we have numerous generalizations of the metric structure. Intent readers are referred to \cite{D3,M2,HS} and references therein for some relevant extensions. Unfortunately, some of the generalizations become redundant and turn into metrizable merely adding premise to the subject. As in 2007, Huang and Zhang \cite{HZ} introduced the notion of a cone metric space on a positive cone in a Banach space. Following that, a lot of research articles dealt with the setting and evolved the structure with a number of results. Although in 2011, Khani and Pourmahdian \cite{KP} explicitly constructed a metric on a specified cone metric space and proved that cone metric spaces are metrizable.

Recently, Jleli and Samet \cite{JS1} coined another exciting generalization of our usual metric space concept. By means of a certain class of functions, the authors defined the notion of an $\mathcal{F}$-metric space. Firstly, we recall the definition of such kind of metric spaces. Consider $\mathcal{F}$ be any set of functions $f:(0,\infty)\rightarrow \mathbb{R}$ which satisfy the following conditions:

($\mathcal{F}_1$) $f$ is non-decreasing, i.e., $ 0<s<t\Rightarrow f(s)\leq f(t)$.

($\mathcal{F}_2$) For every sequence $\{t_n\}_{n\in \mathbb{N}}\subseteq (0,+\infty)$, we have
$$\lim_{n\to +\infty}t_n=0 \Longleftrightarrow \lim_{n\to +\infty}f(t_n)=-\infty.$$

Employing these functions the authors came up with an extension of the concept of usual metric spaces and constructed the notion of $\mathcal{F}$-metric spaces as follows:
\begin{definition} \cite{JS1} \label{D1}
Let $X$ be a non-empty set and let $D:X\times X\rightarrow [0,\infty)$ be a given mapping. Suppose that there exists $(f,\alpha)\in \mathcal{F}\times [0,\infty)$ such that
\begin{enumerate}
\item[(D1)] $(x,y)\in X \times X,~~ D(x,y)=0\Longleftrightarrow x=y$.
\item[(D2)] $D(x,y)=D(y,x),$ for all $(x,y)\in X \times X$.
\item[(D3)] For every $(x,y)\in X\times X$, for each $N\in \mathbb{N}, N\geq2$, and for every $(u_i)_{i=1}^{N}\subseteq X $ with $(u_1,u_N)=(x,y)$, we have
$$D(x,y)>0 \Longrightarrow f(D(x,y))\leq f\Big(\sum_{i=1}^{N-1} D(u_i, u_{i+1})\Big)+ \alpha.$$
\end{enumerate}
Then $D$ is said to be an $\mathcal{F}$-metric on $X$, and the pair $(X,D)$ is said to be an $\mathcal{F}$-metric space.
\end{definition}
In this article, by means of the undermentioned Theorem \ref{FMS}, we assert that the recently proposed $\mathcal{F}$-metric spaces \cite{JS1} are metrizable.  Hence we also state that the new metric induces same notions of convergence, Cauchy sequence and completeness as that of the usual metric spaces. Moreover, we confirm that the celebrated Banach contraction principle in $\mathcal{F}$-metric context can be derived from its existing standard metric counterpart. Furthermore, we construct a non-trivial example of an $\mathcal{F}$-metric space with $(f,\alpha)\in \mathcal{F}\times [0,\infty)$ such that $f$ is not continuous from right at some point in $(0,\infty).$

\section{\bf Metrizability of \texorpdfstring{$\mathcal{F}$}{}-metric spaces}
In this section, we deal with the metrizability of newly introduced $\mathcal{F}$-metric spaces defined by Jleli and Samet in \cite{JS1}.
\begin{theorem} \label{FMS}
Let $X$ be an $\mathcal{F}$-metric space with $(f,\alpha)\in \mathcal{F}\times [0,\infty)$. Then $X$ is metrizable.
\end{theorem}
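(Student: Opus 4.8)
The plan is to show that the topology induced by the $\mathcal{F}$-metric is metrizable by producing an explicit metric that generates the same topology, and the cleanest route is to invoke a standard metrization criterion rather than build the metric by hand. First I would recall how Jleli and Samet topologize an $\mathcal{F}$-metric space: a set $U\subseteq X$ is open if for every $x\in U$ there is some $r>0$ with $B(x,r)=\{y\in X: D(x,y)<r\}\subseteq U$. So I would begin by verifying that these balls form a basis, i.e.\ that $(X,D)$ is a topological space in the usual sense, and that the space is $T_1$ (indeed Hausdorff), which follows immediately from (D1) separating points together with (D2).

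The core of the argument is to show that this topology is \emph{first countable} and, more importantly, that it satisfies a genuine sequential/neighborhood structure controlled by the single parameter $r$. My plan is to prove that the family $\{B(x,1/n)\}_{n\in\mathbb{N}}$ is a countable local base at each point $x$, which is clear since $B(x,r)$ shrinks monotonically as $r\downarrow 0$. The decisive step is to establish that convergence in the $\mathcal{F}$-metric topology is exactly convergence of $D(x_n,x)\to 0$; here condition (D3) together with ($\mathcal{F}_1$) and ($\mathcal{F}_2$) is what lets one pass between the $f$-controlled triangle-type inequality and ordinary numerical convergence. Once convergence, Cauchyness, and the neighborhood structure are all seen to be governed by the nonnegative real-valued quantity $D$, the space behaves for all topological purposes like a space with a symmetric, point-separating distance.

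To conclude metrizability I would appeal to a classical metrization theorem. The most economical choice is the Chittenden/Niemytzki--Wilson type result: a symmetric, developable (or uniformly regular semimetric) space is metrizable, or equivalently I would verify the hypotheses of the Urysohn metrization theorem by checking that the topology is regular and second countable (second countability following if $X$ is separable, otherwise one argues via first countability plus a $T_3$ and uniform structure). The cleanest formulation is to exhibit a countable family of ``uniform'' entourages $\{V_n\}$ with $V_n=\{(x,y): D(x,y)<1/n\}$ satisfying $V_{n+1}\circ V_{n+1}\subseteq V_n$ up to the $(f,\alpha)$ distortion, making $(X,D)$ a uniform space with a countable base of entourages; by the classical metrization theorem for uniformities (every uniform space with a countable base of entourages is pseudometrizable, and point-separation upgrades this to a metric), $X$ is metrizable.

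The main obstacle I anticipate is precisely this last composition estimate: because (D3) replaces the ordinary triangle inequality with the distorted inequality $f(D(x,y))\le f\!\left(\sum_{i=1}^{N-1}D(u_i,u_{i+1})\right)+\alpha$, the sets $V_n$ do \emph{not} satisfy the clean triangle condition $V_{n+1}\circ V_{n+1}\subseteq V_n$ directly, and the additive constant $\alpha$ together with the merely non-decreasing (not necessarily continuous or subadditive) $f$ must be absorbed. The trick will be to use property ($\mathcal{F}_2$) to choose, for each $n$, a threshold $\delta_n>0$ small enough that $f(\delta_n)$ is very negative, so that $f(2\delta_n)\le f(\delta_n)+$ (a controlled amount) forces the composite distance back below $1/n$; iterating this choice produces a decreasing sequence of entourages generating the same topology and satisfying the uniformity axioms, after which the standard metrization machinery finishes the proof.
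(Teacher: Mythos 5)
Your proposal is correct in substance, but it proves metrizability by a genuinely different route than the paper. The paper is constructive: it takes the explicit chain-infimum metric $d(x,y)=\inf\bigl\{\sum_{i=1}^{N-1}D(u_i,u_{i+1})\bigr\}$ from Jleli--Samet's Theorem 3.1, together with the two-sided estimates $f(d(x,y))\le f(D(x,y))\le f(d(x,y)+\varepsilon)+\alpha$, splits into cases according to whether $f$ is continuous on $(0,\infty)$, and proves $\tau_d=\tau_F$ by direct ball inclusions. You instead build a uniformity: the sets $V_\delta=\{(x,y):D(x,y)<\delta\}$ contain the diagonal, are symmetric by (D2), and --- this is your key step --- satisfy $V_{\delta'}\circ V_{\delta'}\subseteq V_\delta$ once $\delta'$ is chosen via ($\mathcal{F}_2$) so that $f(t)<f(\delta)-\alpha$ for all $0<t<2\delta'$: then (D3) with $N=3$ gives $f(D(x,y))\le f(D(x,z)+D(z,y))+\alpha<f(\delta)$, and ($\mathcal{F}_1$) forces $D(x,y)<\delta$. (State it this way; your formulation ``$f(2\delta_n)\le f(\delta_n)+$ a controlled amount'' is not what is needed and is not generally true for a merely non-decreasing $f$.) With $\{V_{1/n}\}$ a countable base that is separated by (D1), the classical metrization theorem for uniform spaces --- or equivalently the Chittenden-type theorem on uniformly regular semimetrics you cite --- yields a metric, and the uniform topology is by definition the Jleli--Samet topology, so metrizability follows. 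Note that both proofs run on the same engine: ($\mathcal{F}_2$) absorbs the additive constant $\alpha$, exactly as in the paper's inclusion $B_d(x,\delta)\subseteq B_D(x,r)$. What each approach buys: yours is self-contained (no appeal to Jleli--Samet's Theorem 3.1, no case split on the continuity of $f$) but non-constructive, while the paper's explicit metric $d$ is precisely what its later results need --- the equivalence of Cauchy sequences, convergence and completeness, and the transfer of the Banach contraction principle are all stated for that specific $d$, so with your proof alone those theorems would require reintroducing it.
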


\begin{proof}
Let $(X,D)$ be an $\mathcal{F}$-metric space with $(f,\alpha)\in \mathcal{F}\times [0,\infty)$. We will consider two classes of functions $\mathfrak{F_1}$ and $\mathfrak{F_2}$ where $$\mathfrak{F_1}=\{f\in \mathcal{F}: f~ \mbox{is continuous in}~ (0,\infty)\}$$ and $$\mathfrak{F_2}=\{f~ \in \mathcal{F}: f~ \mbox{is not continuous from right at some point in}~ (0,\infty)\}.$$
First of all suppose that $(X,D)$ is an $\mathcal{F}$-metric space with $(f,\alpha)\in \mathfrak{F_1}\times [0,\infty)$. In \cite[\, Theorem 3.1]{JS1}, it is shown that $X$ is $\mathcal{F}$-metric bounded with respect to $(f,\alpha)\in \mathfrak{F_1}\times [0,\infty)$. So there exists a metric $d:X \times X \rightarrow [0, \infty)$ such that if $x,y \in X$ with $D(x,y)>0$,  then $$ f(d(x,y))\leq f(D(x,y))\leq f(d(x,y))+\alpha.$$

Now suppose that $\tau$ denotes the topology generated by $d$ and $\tau_F$ denotes the topology generated by $D.$

Suppose that $U \in \tau$ and $x\in U.$ Then there exists $r>0$ such that $x\in B_d (x,r)\subset U.$ If   $y\in B_D (x,r)$ and $ |B_D (x,r)|=1$,  then $y \in B_d (x,r).$ On the other hand, if $ |B_D (x,r)|>1$ then let $y \in B_D (x,r),~ y\neq x.$ Then $D(y,x)<r$ and $D(x,y)>0.$ Now since $X$ is $\mathcal{F}$-metric bounded so
\begin{align*}
&f(d(x,y))\leq f(D(x,y))\\
&\Rightarrow d(x,y)\leq D(x,y)~(\mbox{by}~\mathcal{F}_1~\mbox{condition})\\
&\Rightarrow d(x,y)<r\\
&\Rightarrow y \in B_d (x,r)\\
&\Rightarrow B_D (x,r)\subset B_d (x,r).
\end{align*}
So $x\in B_D (x,r)\subseteq B_d (x,r)\subseteq U.$ This shows that $\tau \subseteq \tau_F.$

Now let $V\in \tau_F.$ So there exists $r>0$ such that $x\in B_D (x,r)\subseteq V.$ By $\mathcal{F}_2$ condition, for$(f(r)-\alpha)$ there exists a $\delta>0$ such that $0<t<\delta$ implies $f(t)<f(r)-\alpha.$

If   $y\in B_d (x,\delta)$ and $ |B_d (x,\delta)|=1$,  then $y \in B_D (x,r).$ On the other hand, if $ |B_d (x,\delta)|>1$ then let $y \in B_d (x,\delta),~ y\neq x.$  Since $X$ is $\mathcal{F}$-metric bounded so
\begin{align*}
f(D(x,y))\leq & f(d(x,y))+\alpha\\
\Rightarrow f(D(x,y))< & f(r)-\alpha+\alpha = f(r) ~(\mbox{since}~d(x,y)<\delta)\\
\Rightarrow D(x,y)<&r\\
\Rightarrow y \in & B_D (x,r)\\
\Rightarrow B_d (x,\delta)\subseteq & B_D (x,r)
\end{align*}

So $x\in B_d (x,\delta)\subset B_D (x,r)\subseteq V.$ This shows that $\tau_F \subseteq \tau.$

Hence $\tau=\tau_F.$ In this case $X$ is metrizable.

Now suppose that $(X,D)$ is an $\mathcal{F}$-metric space with $(f,\alpha)\in \mathfrak{F_2}\times [0,\infty)$. In \cite[\, Theorem 3.1]{JS1}, the authors defined the metric $d:X\times X\rightarrow [0,\infty)$ as
$$d(x,y)=\mbox{inf}\left\{\sum_{i=1}^{N-1}D(u_i, u_{i+1}): N\in \mathbb{N}, N\geq 2, \{u_i\}_{i=1}^{N}\subseteq X~\mbox{with}~(u_1,u_N)=(x,y)\right\}.$$ They also showed that for any $\varepsilon>0, x,y\in X, y\neq x$
$$f(D(x,y))\leq f(d(x,y)+\varepsilon)+\alpha.$$

So if $f\in \mathfrak{F_2}$, then using \cite[\, Theorem 3.1]{JS1}, we have, \[ f(d(x,y))\leq f(D(x,y))\leq f(d(x,y)+\varepsilon)+\alpha\] where $\varepsilon>0, ~y\neq x.$ Now we want to show that $\tau=\tau_F .$ We can show by using the similar approach as above case that $\tau \subseteq \tau_F$.  Now we will show that $\tau_F \subseteq \tau.$

Let $V\in \tau_F$ and $x\in V.$ So there exists $r>0$ such that $x\in B_D (x,r)\subset V.$ By ($\mathcal{F}_2$) condition, for $(f(r)-\alpha)$ there exists a $\delta>0$ such that $0<t<\delta$ implies $f(t)<f(r)-\alpha.$ Choose $0<\varepsilon<\delta.$ If $y\in B_d (x,\delta - \varepsilon)$ and $ |B_d (x,\delta - \varepsilon)|=1$,  then $y \in B_D (x,r).$ On the other hand, if $|B_d (x,\delta -\varepsilon)|>1$ then for $y \in B_d (x,\delta - \varepsilon),~ y\neq x$, we have
\begin{align*}
d(x,y)&<\delta-\varepsilon\\
\Rightarrow d(x,y)+\varepsilon &<\delta\\
\Rightarrow f(d(x,y)+\varepsilon)&<f(r)-\alpha\\
\Rightarrow f(D(x,y))&<f(r)\\
\Rightarrow D(x,y)&<r.
\end{align*}
  So we have $x\in B_d (x,\delta-\varepsilon)\subset B_D (x,r)\subseteq V.$ This shows that $\tau_F \subseteq \tau.$
Hence $\tau=\tau_F.$ Therefore we can conclude that $X$ is metrizable.
\end{proof}

In \cite{JS1}, the authors presented an  examples of $\mathcal{F}$-metric spaces  with $(f,\alpha)\in \mathcal{F}\times [0,\infty)$ such that $f$ is continuous in $(0,\infty)$. But here in the subsequent example, we construct an $\mathcal{F}$-metric space with $(f,\alpha)\in \mathcal{F}\times [0,\infty)$ such that $f$ is not continuous at some point in $(0,\infty)$.

\begin{example}
Let $X=\{0\}\cup \{\frac{1}{n}: n=1,2,...,100\}$ and $D:X\times X\rightarrow [0,\infty)$ be defined by
\[D(x,y)=
\begin{cases}
0,\;~\mbox{if}~ x=y;\\
n, ~\mbox{if}~ x=0, y=\frac{1}{n}~ \mbox{or}~ x=\frac{1}{n}, y=0~\mbox{for some}~n\in \{1,2,\dots,100\};\\
2\mid m-m'\mid +100, \; ~\mbox{if}~ x=\frac{1}{m} \;  y=\frac{1}{m'}~\mbox{with}~m\neq m';\\
\end{cases}
\] for all $x,y\in X$. Consider an $f\in \mathcal{F}$ such that
\[f(t)=
\begin{cases}
-\frac{1}{t}  ,\;0<t\leq 1\\
 t   ,  \;t>1\\
\end{cases}
\]
which is not continuous from right at $t=1.$ It can be easily proved that $D$ satisfies $(D_1)$ and $(D_2)$. Let $(U_i)_{i=1}^{N}\subseteq X$, where $N\in \mathbb{N}$, $N\geq 2$ and $U_1=x, U_N=y$.
Let $I=\left\{i\in \{1,2,3,...,N-1\}: U_i\neq U_{i+1}, U_i=0, U_{i+1}=\frac{1}{n}~ \mbox{for some } n\in \{1,2,\dots,100\}\right\}$ and $J=\left\{i\in \{1,2,3,...,N-1\} :U_i\neq U_{i+1}, U_i=\frac{1}{m}, U_{i+1}=\frac{1}{m'}~ \mbox{for some } m,m'\in \{1,2,\dots, 100\}\right\}$. If $|I|=p$, then $|J|=N-1-p.$ Let $\alpha=300.$ Now we the following cases arise:

\textbf{Case I}: Let $D(x,y)>1$. For $x=0, y=\frac{1}{t},t=2,3,4,\dots 100$. Now,
\begin{align*}
f(D(x,y))-f\left(\sum_{i=1}^{N-1}D(U_i,U_{i+1})\right)-\alpha &=t-\\&f\left(\sum_{i\in I}D(U_i,U_{i+1})+\sum_{i\in J}D(U_i,U_{i+1})\right)-\alpha\\
&\leq 100-\\&f\left(\sum_{i\in I} n+\sum_{i\in J }2\mid m-m'\mid +100\right)-\alpha\\
&\leq 100-\\&\left(\sum_{i\in I} n+\sum_{i\in J }2\mid m-m'\mid +100\right)-\alpha\\
&\leq 100-p-100(N-1-p)-\alpha\\
&=200+99p-100N-\alpha\\
&\leq 200+99(N-1)-100N-\alpha\\
&=101-N-\alpha\\
&\leq 99-\alpha\\
&\leq 0.
\end{align*}
\textbf{Case II}: Let $D(x,y)=1$. For $x=0, y=1$. Now,
\begin{align*}
f(D(x,y))-f\left(\sum_{i=1}^{N-1}D(U_i,U_{i+1})\right)-\alpha &=-1-\\&f\left(\sum_{i\in I}D(U_i,U_{i+1}))+\sum_{i\in J}D(U_i,U_{i+1})\right)-\alpha\\
&= -1-\\&f\left(\sum_{i\in I} n+\sum_{i\in J }2\mid m-m'\mid +100\right)-\alpha\\
&= -1-\left(\sum_{i\in I} n+\sum_{i\in J }2\mid m-m'\mid +100\right)-\alpha\\
&\leq -1-p-100(N-1-p)-\alpha\\
&\leq 99+99(N-1)-100N-\alpha\\
&\leq -2-\alpha\\
&\leq 0.
\end{align*}
\textbf{Case III}: Let $\mid I\mid =N-1$,$\mid J \mid =0$ . For $x=0, y=\frac{1}{t},t=2,3,4,...,100$. Now,
\begin{align*}
f(D(x,y))-f\left(\sum_{i=1}^{N-1}D(U_i,U_{i+1})\right)-\alpha &=t-f\left(\sum_{i\in I}D(U_i,U_{i+1})\right)-\alpha\\
&\leq 100-f\left(\sum_{i\in I} n\right)-\alpha\\
&\leq 100-\sum_{i\in I} n-\alpha\\
&\leq 100-(N-1)-\alpha\\
&\leq 101-N-\alpha\\
&\leq 99-\alpha\\
&\leq 0.
\end{align*}
\textbf{Case IV}: Let $\mid J\mid =N-1$,$\mid I \mid =0$ . For $x=0, y=\frac{1}{t},t=2,3,4,...,100$. Now,
\begin{align*}
f(D(x,y))-f\left(\sum_{i=1}^{N-1}D(U_i,U_{i+1})\right)-\alpha &=t-f\left(\sum_{i\in J}D(U_i,U_{i+1})\right)-\alpha\\
&\leq 100-f\left(\sum_{i\in J }2\mid m-m'\mid +100\right)-\alpha\\
&\leq 100-\left(\sum_{i\in J }2\mid m-m'\mid +100\right)-\alpha\\
&\leq 100-100(N-1)-\alpha\\
&=200-100N-\alpha\\
&\leq -\alpha\\
&\leq 0.
\end{align*}
\textbf{Case V}: For $x=\frac{1}{n}, y=\frac{1}{m}$ for some $n,m\in \mathbb{N}$. Now,
\begin{align*}
f(D(x,y))-f\left(\sum_{i=1}^{N-1}D(U_i,U_{i+1})\right)-\alpha &=f(D(x,y))-\\&f\left(\sum_{i\in I}D(U_i,U_{i+1}))+\sum_{i\in J}D(U_i,U_{i+1})\right)-\alpha\\
&= f(D(x,y))-\\& f\left(\sum_{i\in I} n+\sum_{i\in J }2\mid m-m'\mid +100\right)-\alpha\\
&=2\mid n-m \mid +100-\\ & \left(\sum_{i\in I} n+\sum_{i\in J }2\mid m-m'\mid +100\right)-\alpha\\
&\leq 298-p-100(N-1-p)-\alpha\\
&=398+99p-100N-\alpha\\
&\leq 398+99(N-1)-100N-\alpha\\
&=299-N-\alpha\\
&\leq 297-\alpha\\
&\leq 0.
\end{align*}
\textbf{Case VI}: Let  $\mid I\mid =N-1$,$\mid J\mid =0$ and $x=\frac{1}{n}, y=\frac{1}{m}$ for some $n,m\in \mathbb{N}$. Now,
\begin{align*}
&f(D(x,y))-f\left(\sum_{i=1}^{N-1}D(U_i,U_{i+1})\right)-\alpha \\
&=f(D(x,y))-f\left(\sum_{i\in I}D(U_i,U_{i+1})\right)-\alpha\\
&=2\mid n-m \mid +100-\sum_{i\in I} n-\alpha\\
&\leq 298-(N-1)-\alpha\\
&=299-N-\alpha\\
&\leq 297-\alpha\\
&\leq 0.
\end{align*}
\textbf{Case VII}: Let  $\mid I\mid =0$,$\mid J\mid =N-1$ and $x=\frac{1}{n}, y=\frac{1}{m}$ for some $n,m\in \mathbb{N}$.

Now,
\begin{align*}
&f(D(x,y))-f\left(\sum_{i=1}^{N-1}D(U_i,U_{i+1})\right)-\alpha \\
&=2\mid n-m \mid +100-f\left(\sum_{i\in J}D(U_i,U_{i+1})\right)-\alpha\\
&\leq 298-100(N-1)-\alpha\\
&=398-100N-\alpha\\
&\leq 198-\alpha\\
&\leq 0.
\end{align*}
\end{example}
Combining all the cases we have $$x,y\in X, D(x,y)>0\Rightarrow f(D(x,y))\leq f\left(\sum_{i=1}^{N-1}D(U_i,U_{i+1})\right)+\alpha.$$ So $X$ is an $\mathcal{F}$-metric space with $(f,\alpha)\in \mathcal{F}\times [0,\infty)$ such that $f$ is not continuous from right at $t=1.$

\begin{theorem}
Let $X$ be an $\mathcal{F}$-metric space with $(f,\alpha)\in \mathcal{F}\times [0,\infty).$ Then
\begin{enumerate}
\item[(i)]
 $\{x_n\}_{n\in \mathbb{N}}\subseteq X$ is $\mathcal{F}$-Cauchy $\Leftrightarrow \{x_n\}_{n\in \mathbb{N}}\subseteq X$ is a Cauchy sequence with respect to the metric $d$ which is defined in \cite[\, Theorem 3.1]{JS1}.

\item[(ii)]  $\{x_n\}_{n\in \mathbb{N}}\subseteq X$ is $\mathcal{F}$-convergent to $x\in X$ $\Leftrightarrow \{x_n\}_{n\in \mathbb{N}}\subseteq X$ is convergent to $x\in X$ with respect to the metric $d$ which is defined in \cite[\, Theorem 3.1]{JS1}.

\item[(iii)] $X$ is $\mathcal{F}$-complete $\Leftrightarrow X$ is complete with respect to the metric $d$ which is defined in \cite[\, Theorem 3.1]{JS1}.

\end{enumerate}

\end{theorem}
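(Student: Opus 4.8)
The plan is to reduce all three equivalences to the two comparison inequalities between $D$ and the metric $d$ of \cite[Theorem 3.1]{JS1} that already drove the proof of Theorem \ref{FMS}. Concretely, for every $x,y\in X$ with $y\neq x$ and every $\varepsilon>0$ one has, whether $f\in\mathfrak{F_1}$ or $f\in\mathfrak{F_2}$,
$$d(x,y)\le D(x,y)\qquad\text{and}\qquad f(D(x,y))\le f(d(x,y)+\varepsilon)+\alpha.$$
The first is immediate from the definition of $d$ as an infimum (take $N=2$, $(u_1,u_2)=(x,y)$); the second is recorded in \cite[Theorem 3.1]{JS1}. These are exactly the estimates behind the inclusions $\tau\subseteq\tau_F$ and $\tau_F\subseteq\tau$, so the whole argument is the sequential transcription of Theorem \ref{FMS}. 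I note that item (ii) could alternatively be read off from $\tau=\tau_F$, convergence being a topological notion; but since Cauchyness and completeness are metric rather than topological properties, I prefer a uniform treatment of all three items through the inequalities above.

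For the forward directions of (i) and (ii) I would invoke only $d(x,y)\le D(x,y)$: if $\{x_n\}$ is $\mathcal{F}$-Cauchy then $D(x_n,x_m)\to 0$, whence $d(x_n,x_m)\le D(x_n,x_m)\to 0$ and $\{x_n\}$ is $d$-Cauchy, the convergence case being identical with $x_m$ replaced by the limit $x$. For the reverse directions I would replay the $\delta$–$\varepsilon$ argument that established $\tau_F\subseteq\tau$. Fixing $r>0$, condition $(\mathcal{F}_2)$ yields $\delta>0$ with $0<t<\delta\Rightarrow f(t)<f(r)-\alpha$; choosing $\varepsilon\in(0,\delta)$ and using $d$-Cauchyness (resp. $d$-convergence) to force $d(x_n,x_m)<\delta-\varepsilon$ (resp. $d(x_n,x)<\delta-\varepsilon$) for large indices, the second inequality gives $f(D(x_n,x_m))\le f(d(x_n,x_m)+\varepsilon)+\alpha<f(r)$, and monotonicity $(\mathcal{F}_1)$ then forces $D(x_n,x_m)<r$ (the case $x_n=x_m$ being trivial, as $D=0$ there). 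Since $r>0$ is arbitrary, $D(x_n,x_m)\to 0$, i.e. $\{x_n\}$ is $\mathcal{F}$-Cauchy, and the same computation yields $\mathcal{F}$-convergence.

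With (i) and (ii) established, item (iii) is a purely formal consequence that I would dispatch last. If $X$ is $\mathcal{F}$-complete and $\{x_n\}$ is $d$-Cauchy, then by (i) it is $\mathcal{F}$-Cauchy, hence $\mathcal{F}$-convergent to some $x\in X$, hence $d$-convergent to $x$ by (ii), so $(X,d)$ is complete; the converse is the same chain of implications traversed in the opposite order.

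The one place demanding genuine care is the reverse direction in the non-continuous case $f\in\mathfrak{F_2}$, where only the weaker inequality carrying the extra $\varepsilon$ is available. There it is essential to fix $r$ first, extract $\delta$ from $(\mathcal{F}_2)$, and only afterwards select $\varepsilon\in(0,\delta)$ together with the threshold index, so that the perturbed quantity $d(x_n,x_m)+\varepsilon$ remains strictly below $\delta$ and $f(d(x_n,x_m)+\varepsilon)<f(r)-\alpha$ can be invoked. Everything else is routine, since $\mathcal{F}$-convergence and $\mathcal{F}$-Cauchyness are stated directly via $D(x_n,\cdot)\to 0$, and the two comparison inequalities translate these into the corresponding $d$-statements and back.
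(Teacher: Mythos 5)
Your proposal is correct and follows essentially the same route as the paper: the forward directions come from $d(x,y)\le D(x,y)$ (the infimum definition of $d$), and the reverse directions replay the $(\mathcal{F}_2)$-argument, extracting $\delta$ for $f(r)-\alpha$ and using $f(D(x_n,x_m))\le f(d(x_n,x_m)+\varepsilon)+\alpha<f(r)$ together with monotonicity — the paper's choice $\varepsilon=\delta/2$ is just a specific instance of your $\varepsilon\in(0,\delta)$. The only difference is that the paper proves (i) in detail and dismisses (ii) and (iii) as straightforward, whereas you spell out (ii) by the same computation and derive (iii) formally from (i) and (ii), which fills in exactly what the paper omits.
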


\begin{proof}
(i) Suppose that $\{x_n\}_{n\in \mathbb{N}}\subseteq X$ is $\mathcal{F}$-Cauchy and let $\varepsilon>0.$ Then there exists $K\in \mathbb{N}$ such that $$D(x_n,x_m)<\varepsilon~\forall~ n,m\geq K$$
$$\Rightarrow d(x_n,x_m)\leq D(x_n,x_m)<\varepsilon~\forall~ n,m\geq K~(\mbox{from the definition of the metric}~d).$$
This shows that $\{x_n\}_{n\in \mathbb{N}}\subseteq X$ is a Cauchy sequence with respect to the metric $d.$

For the reverse implication, let $\{x_n\}_{n\in \mathbb{N}}\subseteq X$ be a Cauchy sequence with respect to the metric $d.$ Let $\varepsilon>0.$ By $\mathcal{F}_2$ condition, for $(f(\varepsilon)-\alpha)$ there exists a $\delta>0$ such that $0<t<\delta$ implies $f(t)<f(\varepsilon)-\alpha.$ Take $\frac{\delta}{2}>0.$ So there exists $K\in \mathbb{N}$ such that $$d(x_n,x_m)<\frac{\delta}{2}~\forall~n,m\geq K.$$

Let $n,m \geq K, x_n\neq x_m.$ Then we have the inequality
$$f(D(x_n,x_m))\leq f(d(x_n,x_m)+\frac{\delta}{2})+\alpha$$
$$\Rightarrow f(D(x_n,x_m))<f(\varepsilon)$$
$$\Rightarrow D(x_n,x_m)<\varepsilon.$$
This shows that $\{x_n\}_{n\in \mathbb{N}}\subseteq X$ is $\mathcal{F}$-Cauchy.
Proofs of (ii) and (iii) are straightforward, so omitted.
\end{proof}

\begin{theorem}
Let $X$ be an $\mathcal{F}$-metric space with $(f,\alpha)\in \mathcal{F}\times [0,\infty)$ and $g:X\rightarrow X $ be a contraction on $X$ in the setting of $\mathcal{F}$-metric space. Then $g$ is a  contraction for some suitable metric $d$ on $X.$
\end{theorem}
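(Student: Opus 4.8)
The plan is to show that the very same constant witnessing $g$ as a contraction in the $\mathcal{F}$-metric sense also witnesses $g$ as a contraction for the induced metric $d$. Recall that $g$ being a contraction on the $\mathcal{F}$-metric space $(X,D)$ means there exists $k\in(0,1)$ with
$$D(g(x),g(y))\leq k\,D(x,y)\qquad\text{for all }x,y\in X,$$
and that the suitable metric $d$ is the one furnished by \cite[Theorem 3.1]{JS1}, namely the infimal chain-length
$$d(x,y)=\inf\left\{\sum_{i=1}^{N-1}D(u_i,u_{i+1}):N\geq 2,\ \{u_i\}_{i=1}^{N}\subseteq X,\ (u_1,u_N)=(x,y)\right\}.$$
My goal is therefore to establish $d(g(x),g(y))\leq k\,d(x,y)$ for all $x,y\in X$.

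First I would fix $x,y\in X$ and take an arbitrary admissible chain $u_1=x,u_2,\dots,u_N=y$ joining $x$ to $y$. The crucial observation is that the image points $g(u_1)=g(x),g(u_2),\dots,g(u_N)=g(y)$ again form an admissible chain, this time joining $g(x)$ to $g(y)$. Applying the $\mathcal{F}$-contraction inequality edge by edge and summing gives
\begin{align*}
d(g(x),g(y))&\leq\sum_{i=1}^{N-1}D\big(g(u_i),g(u_{i+1})\big)\\
&\leq k\sum_{i=1}^{N-1}D(u_i,u_{i+1}),
\end{align*}
where the first inequality is just the defining infimum for $d$ evaluated on the particular chain $\{g(u_i)\}_{i=1}^{N}$, and the second is the edge-wise contraction estimate.

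Next I would take the infimum of the right-hand side over all admissible chains joining $x$ to $y$. Since the left-hand side $d(g(x),g(y))$ does not depend on the chosen chain and $k>0$ is a fixed constant, this yields
$$d(g(x),g(y))\leq k\,\inf\sum_{i=1}^{N-1}D(u_i,u_{i+1})=k\,d(x,y).$$
The degenerate case $x=y$ is immediate, since then $g(x)=g(y)$ and both sides vanish. As $k\in(0,1)$, this shows $g$ is a genuine $k$-contraction with respect to $d$.

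I do not expect a serious obstacle: the argument rests entirely on the structural fact that $g$ sends a $D$-chain from $x$ to $y$ to a $D$-chain from $g(x)$ to $g(y)$, and that scaling each edge by $k$ survives both summation and the passage to the infimum. In particular, and in contrast to Theorem \ref{FMS}, neither continuity nor right-continuity of $f$ enters the proof, and the contraction constant $k$ is transferred unchanged. The only point worth recording is that the family of admissible chains joining any two points is always nonempty, since it contains the trivial two-term chain $x,y$; this keeps every infimum above well defined and legitimizes the displayed inequalities.
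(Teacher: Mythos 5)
Your proof is correct and follows essentially the same route as the paper's: both arguments rest on the observation that $g$ maps an admissible $D$-chain from $x$ to $y$ to an admissible $D$-chain from $g(x)$ to $g(y)$, apply the contraction inequality edge-wise, and then pass to the infimum to obtain $d(g(x),g(y))\leq k\,d(x,y)$. The only cosmetic difference is that the paper divides by $K$ before taking the infimum while you take the infimum directly, and you additionally record the trivial case $x=y$ and the nonemptiness of the chain family, which are harmless refinements of the same argument.
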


\begin{proof}
Let $X$ be an $\mathcal{F}$-metric space with $(f,\alpha)\in \mathcal{F}\times [0,\infty)$. From Theorem \ref{FMS} we can say $X$  is metrizable with respect to the following metric:
$$d(x,y)=\mbox{inf}\left\{\sum_{i=1}^{N-1}D(u_i, u_{i+1}): N\in \mathbb{N}, N\geq 2, \{u_i\}_{i=1}^{N}\subseteq X~\mbox{with}~(u_1,u_N)=(x,y)\right\}.$$
Let $g:X\rightarrow X $ be a contraction mapping in the setting of $\mathcal{F}$-metric space i.e., there exists $K\in (0,1)$ such that $$D(g(x),g(y))\leq K D(x,y)~\forall~x,y\in X.$$
Let $x,y\in X$ and $\{t_n\}_{n=1}^{p}\subseteq X$ be such that $t_{1}=x$ and $t_{p}=y.$ Then $\{g(t_n)\}_{n=1}^{p}=\{u_{n}\}_{n=1}^{p}\subseteq X$ is a sequence in $X$ such that $(u_{1},u_{p})=(g(x),g(y))$ and $u_{i}=g(t_i)~\forall~ i=1,2,\dots,p.$ Now
\begin{align*}
d(g(x),g(y))&\leq \sum_{i=1}^{p-1}D(u_i,u_{i+1})\\
&= \sum_{i=1}^{p-1} D(g(t_i),g(t_{i+1}))\\
&\leq K \sum_{i=1}^{p-1}D(t_i,t_{i+1})
\end{align*}
$$\Rightarrow \frac{d(g(x),g(y))}{K}\leq \sum_{i=1}^{p-1}D(t_i,t_{i+1})$$
$$\Rightarrow \frac{d(g(x),g(y))}{K}\leq d(x,y).$$
So we have $d(g(x),g(y))\leq K d(x,y)~\forall~x,y\in X.$ So $g$ is a  contraction on $X$ with respect to the metric $d.$
\end{proof}

\begin{corollary}
It is easy to conclude from Theorem 2.3 and Theorem 2.4 that the Banach contraction principle due to Jleli and Samet in \cite[\, Theorem 5.1.]{JS1} is the direct consequence of the original Banach contraction principle for the corresponding metric space $(X,d).$
\end{corollary}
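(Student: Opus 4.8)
The plan is to exploit the fact, already established in this section, that the $\mathcal{F}$-metric structure and the induced metric $d$ agree on every relevant notion, so that the $\mathcal{F}$-metric fixed point theorem simply collapses onto the classical one. Concretely, let $(X,D)$ be a complete $\mathcal{F}$-metric space with $(f,\alpha)\in\mathcal{F}\times[0,\infty)$ and let $g:X\to X$ be a contraction in the $\mathcal{F}$-metric sense, i.e.\ there exists $K\in(0,1)$ with $D(g(x),g(y))\le K\,D(x,y)$ for all $x,y\in X$. Let $d$ be the metric produced in Theorem \ref{FMS}.

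First I would transfer the hypotheses of the Jleli--Samet theorem into statements about $(X,d)$. Since $X$ is assumed to be $\mathcal{F}$-complete, Theorem 2.3(iii) yields that $(X,d)$ is a complete metric space. Since $g$ is an $\mathcal{F}$-contraction with constant $K$, Theorem 2.4 gives $d(g(x),g(y))\le K\,d(x,y)$ for all $x,y\in X$, so $g$ is an ordinary Banach contraction on the complete metric space $(X,d)$ with the \emph{same} constant $K\in(0,1)$.

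Next I would invoke the classical Banach contraction principle for $(X,d)$: the map $g$ admits a unique fixed point $x^{*}\in X$, and for every $x_{0}\in X$ the Picard sequence $\{g^{n}(x_{0})\}_{n\in\mathbb{N}}$ converges to $x^{*}$ with respect to $d$. Finally I would translate this conclusion back into the $\mathcal{F}$-metric language. The relation $g(x^{*})=x^{*}$ is purely algebraic and refers to no metric whatsoever, so existence and uniqueness of the fixed point carry over verbatim; and by Theorem 2.3(ii), $d$-convergence of $\{g^{n}(x_{0})\}$ to $x^{*}$ is equivalent to its $\mathcal{F}$-convergence to $x^{*}$. Hence $\{g^{n}(x_{0})\}$ is $\mathcal{F}$-convergent to the unique fixed point $x^{*}$, which is precisely the assertion of \cite[Theorem 5.1]{JS1}.

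The argument presents no genuine obstacle: once Theorems 2.3 and 2.4 are in hand, each ingredient of the $\mathcal{F}$-metric statement (completeness, the contraction constant, and the mode of convergence of the iterates) is matched with a corresponding ingredient of the metric statement, and the reduction is immediate. The only point demanding a moment's care is bookkeeping, namely verifying that the notions of $\mathcal{F}$-completeness and $\mathcal{F}$-convergence as formulated in \cite{JS1} are exactly the ones equated with their $d$-counterparts in Theorem 2.3, so that no hidden gap is left between the two formulations.
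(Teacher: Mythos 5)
Your proposal is correct and is exactly the intended argument: the paper states this corollary without a written proof precisely because the reduction is the one you give, namely transferring $\mathcal{F}$-completeness to $d$-completeness via Theorem 2.3(iii), transferring the contraction property with the same constant $K$ via Theorem 2.4, applying the classical Banach contraction principle to $(X,d)$, and translating the convergence of the Picard iterates back via Theorem 2.3(ii). Your closing remark about checking that the notions equated in Theorem 2.3 are the same ones used in \cite[Theorem 5.1]{JS1} is the right bookkeeping point, and no gap remains.
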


\begin{Acknowledgement}
 The Research is funded by the Council of Scientific and Industrial Research (CSIR), Government of India under the Grant Number: $25(0285)/18/EMR-II$.
\end{Acknowledgement}

\bibliographystyle{plain}

\end{document}